%------------------------------------------------------------------------------
% Beginning of journal.tex
%------------------------------------------------------------------------------
%
% AMS-LaTeX 1.2 sample file for journals, based on amsart.cls.
%
%\documentclass{amsart}

\documentclass[amscd,amssymb,verbatim,11pt]{amsart}
\usepackage{epsfig}

%\newtheorem{theorem}{Theorem}[section]
%\newtheorem{proposition}{Proposition}[section]
%\newtheorem{conjecture}{Conjecture}[section]
%\newtheorem{lemma}[theorem]{Lemma}

%\theoremstyle{definition}
%\newtheorem{definition}[theorem]{Definition}
%\newtheorem{example}[theorem]{Example}
%\newtheorem{xca}[theorem]{Exercise}

%\theoremstyle{remark}
%\newtheorem{remark}[theorem]{Remark}

%\numberwithin{equation}

%    Absolute value notation

%    Blank box placeholder for figures (to avoid requiring any
%    particular graphics capabilities for printing this document).

\begin{document}

\title[Valuations]{Asymptotic valuations of sequences satisfying 
first order recurrences}

%    Information for second author
\author{Tewodros Amdeberhan}
\address{Department of Mathematics,
Tulane University, New Orleans, LA 70118}
\email{tamdeber@tulane.edu}

%    Information for second author
\author{Luis A. Medina}
\address{Department of Mathematics,
Tulane University, New Orleans, LA 70118}
\email{lmedina@math.tulane.edu}

%    Information for second author
\author{Victor H. Moll}
\address{Department of Mathematics,
Tulane University, New Orleans, LA 70118}
\email{vhm@math.tulane.edu}

%    General info
\subjclass{Primary 11B37, Secondary 11B50, 11B83}

\date{\today}

\keywords{Recurrences, $p$-adic valuations, Hensel's lemma}

\begin{abstract}
Let $t_{n}$ be a sequence that 
satisfies a first order homogeneous 
recurrence $t_{n} = Q(n)t_{n-1}$, where $Q \in \mathbb{Z}[n]$. 
The asymptotic behavior of 
the $p$-adic valuation of $t_{n}$ is described under the assumption that 
all the roots of $Q$ in $\mathbb{Z}/p \mathbb{Z}$ have nonvanishing
derivative. 
\end{abstract}

\maketitle

\newcommand{\nn}{\nonumber}
\newcommand{\ba}{\begin{eqnarray}}
\newcommand{\ea}{\end{eqnarray}}
\newcommand{\no}{\noindent}
\newcommand{\realpart}{\mathop{\rm Re}\nolimits}
\newcommand{\imagpart}{\mathop{\rm Im}\nolimits}

\newtheorem{Definition}{\bf Definition}[section]
\newtheorem{Thm}[Definition]{\bf Theorem}
\newtheorem{Example}[Definition]{\bf Example}
\newtheorem{Lem}[Definition]{\bf Lemma}
\newtheorem{Note}[Definition]{\bf Note}
\newtheorem{Cor}[Definition]{\bf Corollary}
\newtheorem{Conj}[Definition]{\bf Conjecture}
\newtheorem{Prop}[Definition]{\bf Proposition}
\newtheorem{Problem}[Definition]{\bf Problem}
\numberwithin{equation}{section}

\section{Introduction} \label{sec-intro}
\setcounter{equation}{0}

The $p$-adic valuation $\nu_{p}(x)$, for $x \in \mathbb{Q}, \, x \neq 0$, is 
defined by
\begin{equation}
x = p^{\nu_{p}(x)} \frac{a}{b},
\end{equation}
\noindent 
where $a, \, b \in \mathbb{Z}$ and $p$ divides neither $a$ nor $b$. The 
value $\nu_{p}(0)$ is left undefined. 

In this paper we establish the asymptotic behavior of the $p$-adic valuation 
of sequences that satisfy first order recurrences
\begin{equation}
t_{n} = Q(n) t_{n-1}, \quad \, n \geq 1,
\label{rec-tn}
\end{equation}
\noindent
where $Q$ is a polynomial with integer coefficients. Among all the positive
integer zeros of 
$Q$, let $v$ be the maximum modulus. Take $n_{0} > v$. Then the 
recurrence (\ref{rec-tn}) is 
started at this index $n_{0}$. This ensures 
the non-vanishing of $t_{n}$. Without loss of generality, we always assume 
$n_{0} = 0$ and $t_{0} =1$.  We also adopt 
the notation $t_{n}(Q)$ while refering to the sequence 
defined by (\ref{rec-tn}).

The identity 
\begin{equation}
\nu_{p}(t_{n}(Q)) = \sum_{i=1}^{n} \nu_{p}(Q(i)),
\label{basic-sum}
\end{equation}
\noindent
shows that only the zeros of $Q$ in $\mathbb{Z}/p \mathbb{Z}$ contribute to the 
value of $\nu_{p}(t_{n}(Q))$. 
The main tool of our asymptotic analysis  will be 
Hensel's lemma. The version stated here is  reproduced from
\cite{murty3}:

\begin{Lem}[Hensel]
Let $f(x) \in \mathbb{Z}_{p}[x]$ be a polynomial with coefficients in 
the $p$-adic integers 
$\mathbb{Z}_{p}$. Write $f'(x)$ for its formal derivative. If 
$f(x) \equiv 0 \bmod p$ has a solution $a_{1}$ satisfying 
$f'(a_{1}) \not \equiv 0 \bmod p$, then there is a unique $p$-adic integer 
$a$ such that $f(a) = 0$ and  $a \equiv a_{1} \bmod p$.
\end{Lem}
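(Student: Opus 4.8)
The plan is to prove the lemma by Newton iteration (Hensel lifting): I would construct a sequence of $p$-adic integers $a_{1}, a_{2}, a_{3}, \ldots$ with $f(a_{n}) \equiv 0 \bmod p^{n}$ and $a_{n+1} \equiv a_{n} \bmod p^{n}$, and then pass to the limit using the completeness of $\mathbb{Z}_{p}$. The initial approximation $a_{1}$ is exactly the one supplied by hypothesis, and the whole construction hinges on being able to refine one approximation into the next.

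The inductive step is the heart of the argument. Suppose $a_{n}$ has been found with $f(a_{n}) \equiv 0 \bmod p^{n}$. I would seek the next term in the form $a_{n+1} = a_{n} + t\, p^{n}$ with $t \in \mathbb{Z}_{p}$ to be determined. For a polynomial over $\mathbb{Z}_{p}$ there is an exact Taylor identity $f(x+y) = f(x) + f'(x)\, y + y^{2} h(x,y)$ with $h \in \mathbb{Z}_{p}[x,y]$, which avoids any division by factorials. Substituting gives
\[
f(a_{n} + t\, p^{n}) = f(a_{n}) + f'(a_{n})\, t\, p^{n} + t^{2} p^{2n} h(a_{n}, t\, p^{n}).
\]
Since $n \geq 1$, the last term lies in $p^{2n}\mathbb{Z}_{p} \subseteq p^{n+1}\mathbb{Z}_{p}$. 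Writing $f(a_{n}) = p^{n} c_{n}$ with $c_{n} \in \mathbb{Z}_{p}$, the requirement $f(a_{n+1}) \equiv 0 \bmod p^{n+1}$ collapses to the linear congruence $c_{n} + f'(a_{n})\, t \equiv 0 \bmod p$.

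The crucial point, and the only place the hypothesis enters, is that $f'(a_{n})$ is a unit modulo $p$. Indeed $a_{n} \equiv a_{1} \bmod p$ (immediate by induction, since $t\, p^{n} \equiv 0 \bmod p$ for $n \geq 1$) forces $f'(a_{n}) \equiv f'(a_{1}) \not\equiv 0 \bmod p$, so $f'(a_{n})$ is invertible in $\mathbb{Z}/p\mathbb{Z}$ and the congruence determines $t$ uniquely modulo $p$. This simultaneously advances the construction and is what will yield uniqueness.

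Finally, the sequence $(a_{n})$ is Cauchy in $\mathbb{Z}_{p}$ because $a_{n+1} - a_{n} \in p^{n}\mathbb{Z}_{p}$; by completeness it converges to some $a \in \mathbb{Z}_{p}$ with $a \equiv a_{1} \bmod p$. Continuity of the polynomial $f$ gives $f(a) = \lim_{n} f(a_{n}) = 0$, since $f(a_{n}) \to 0$ $p$-adically. For uniqueness, if $b \equiv a_{1} \bmod p$ also satisfies $f(b) = 0$, the same Taylor identity with $y = b - a$ yields $0 = f(b) - f(a) = (b - a)\bigl(f'(a) + (b-a) h(a, b-a)\bigr)$; the second factor is a unit mod $p$ because $f'(a) \equiv f'(a_{1}) \not\equiv 0 \bmod p$ while $b - a \in p\mathbb{Z}_{p}$, forcing $b = a$. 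I expect the main obstacle to be the careful bookkeeping of $p$-adic divisibility at each lifting step and the verification that $f'(a_{n})$ remains a unit throughout; once that is secured, the completeness-and-continuity closing argument is routine.
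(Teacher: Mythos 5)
Your proof is correct. Note that the paper itself gives no proof of this lemma: it is quoted verbatim from Murty's book and used as a black box, so there is no in-paper argument to compare against. Your Newton-iteration (Hensel lifting) argument is the standard proof and all the delicate points are handled properly: the exact Taylor identity $f(x+y)=f(x)+f'(x)y+y^{2}h(x,y)$ with $h\in\mathbb{Z}_{p}[x,y]$ correctly avoids dividing by factorials; the reduction of the lifting condition to the linear congruence $c_{n}+f'(a_{n})t\equiv 0\bmod p$ is right, using $2n\geq n+1$ for $n\geq 1$; the observation that $a_{n}\equiv a_{1}\bmod p$ keeps $f'(a_{n})$ a unit throughout; and the closing uniqueness argument via the factorization $0=(b-a)\bigl(f'(a)+(b-a)h(a,b-a)\bigr)$, with the second factor a unit and $\mathbb{Z}_{p}$ an integral domain, is clean and complete. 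Nothing is missing.
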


We now state our main theorem. This result is an asymptotic description 
of the valuation of the sequence $t_{n}$, defined by (\ref{rec-tn}).

\begin{Thm}
\label{main}
Let $Q \in \mathbb{Z}[n]$. Assume each of the roots of $Q$ satisfies the 
hypothesis of Hensel's lemma. Let $z_{p}$ denote the number of roots of $Q$ in 
$\mathbb{Z}/p \mathbb{Z}$, that is, 
\begin{equation}
z_{p} := | \{ b \in \{ 1, \, 2, \, \ldots, p \}: Q(b) \equiv 0 \bmod p \} |. 
\label{zp}
\end{equation}
\noindent
Then the sequence $\{ t_{n} \}$, defined in (\ref{rec-tn}), obeys the estimate
\begin{equation}
\nu_{p}(t_{n}) \sim \frac{z_{p}n}{p-1} \text{ as } n \to \infty.
\end{equation}
\end{Thm}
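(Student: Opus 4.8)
The plan is to work directly from the exact identity (\ref{basic-sum}), namely $\nu_p(t_n) = \sum_{i=1}^n \nu_p(Q(i))$, and to evaluate the right-hand side asymptotically by partitioning the index range into residue classes modulo $p$. If $b$ is a residue with $Q(b) \not\equiv 0 \bmod p$, then $\nu_p(Q(i)) = 0$ for every integer $i \equiv b \bmod p$, so such classes contribute nothing. Consequently only the $z_p$ root residues survive, and
\begin{equation}
\nu_p(t_n) = \sum_{\substack{1 \le b \le p \\ Q(b) \equiv 0 \bmod p}} \ \sum_{\substack{1 \le i \le n \\ i \equiv b \bmod p}} \nu_p(Q(i)).
\end{equation}
The entire argument then reduces to showing that each inner sum is $\tfrac{n}{p-1} + O(\log n)$, so that summing over the $z_p$ root residues produces the stated leading term.

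The second step linearizes $Q$ near each root using Hensel's lemma. Fix a root residue $b$. By hypothesis $Q'(b) \not\equiv 0 \bmod p$, so Hensel's lemma furnishes a unique $a_b \in \mathbb{Z}_p$ with $Q(a_b) = 0$ and $a_b \equiv b \bmod p$. Dividing $Q$ by the monic factor $x - a_b$ over $\mathbb{Z}_p$ gives $Q(x) = (x - a_b)\, h(x)$ with $h \in \mathbb{Z}_p[x]$ and $h(a_b) = Q'(a_b)$, which is a $p$-adic unit precisely because $b$ is a simple root modulo $p$. Hence for every integer $i \equiv b \bmod p$ one has $h(i) \equiv h(a_b) \not\equiv 0 \bmod p$, so $\nu_p(h(i)) = 0$ and therefore $\nu_p(Q(i)) = \nu_p(i - a_b)$. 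This converts each inner sum into a purely $p$-adic counting problem measuring the distance from the integers $i \le n$ to the fixed $p$-adic number $a_b$.

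The third step is the counting itself. I would rewrite the inner sum by layers as $\sum_{k \ge 1} N_b(k)$, where $N_b(k) = \#\{1 \le i \le n : \nu_p(i - a_b) \ge k\}$. The condition $\nu_p(i - a_b) \ge k$ is equivalent to $i$ lying in a single residue class modulo $p^k$, namely the truncation of $a_b$, so $N_b(k)$ differs from $n/p^k$ by at most $1$ and vanishes once $p^k > n$. Summing the geometric main term gives $\sum_{k \ge 1} n p^{-k} = n/(p-1)$, while the total discrepancy is bounded by the number of nonzero layers, which is $O(\log_p n)$. Thus each inner sum equals $\tfrac{n}{p-1} + O(\log n)$, and summing over the $z_p$ root residues yields $\nu_p(t_n) = \tfrac{z_p n}{p-1} + O(\log n)$; dividing by $n$ gives the asymptotic.

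The main obstacle I anticipate is not the algebra but the careful control of the error terms: one must check that replacing $N_b(k)$ by $n/p^k$ accumulates only an $O(\log n)$ error across all layers, small compared with the linear main term, and that the lift $a_b$ is genuinely the unique source of $p$-divisibility in its residue class. The latter is exactly where the nonvanishing-derivative hypothesis is indispensable, since it guarantees both the uniqueness of the Hensel lift and the fact that the cofactor $h$ never contributes to the valuation; without it, a repeated root modulo $p$ could alter the counting and change the constant $z_p/(p-1)$.
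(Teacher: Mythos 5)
Your proposal follows essentially the same route as the paper: decompose $\sum_{i\le n}\nu_p(Q(i))$ over the $z_p$ root residues, use the Hensel lift to identify the indices $i$ with $p^k\mid Q(i)$ as a single residue class modulo $p^k$, and sum the resulting geometric series with an $O(\log n)$ error. Your factorization $Q(x)=(x-a_b)h(x)$ over $\mathbb{Z}_p$ with $h(i)$ a unit, giving $\nu_p(Q(i))=\nu_p(i-a_b)$, is in fact a cleaner justification of that counting identity than the paper's, which checks that the truncations $\gamma_{j,s}$ are roots of $Q$ modulo $p^{s+1}$ but not explicitly that they are the \emph{only} ones in the class of $b_j$. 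One small correction: $N_b(k)$ need not vanish once $p^k>n$, since the truncation of $a_b$ modulo $p^k$ can remain a small integer $\le n$ for several values of $k$; it does vanish once $p^k>\max_{1\le i\le n}|Q(i)|=O(n^{\deg Q})$, which is exactly the content of the paper's lemma on $r_n$ and still yields the $O(\log_p n)$ bound on the number of nonzero layers that your error estimate requires.
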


\noindent
{\bf Motivation}. The 
most elementary example is $Q(x) = x$. Theorem \ref{main} yields 
$\nu_{p}(n!) \sim n/(p-1)$. This follows from the classical formula of 
Legendre 
\begin{equation}
\nu_{p}(n!) = \frac{n - s_{p}(n)}{p-1},
\end{equation}
\noindent
where $s_{p}(n)$ is the sum of the digits of $n$ in base $p$. 

Our motivation for Theorem \ref{main} comes from the  study of the
sequence $\{ x_{n} \}$ defined by 
\begin{equation}
x_{n} = 
\tan \sum_{k=1}^{n} \tan^{-1} k, \, \quad n \geq 1.
\end{equation}
\noindent 
This same sequence satisfies the recursive relation
\begin{equation}
x_{n} = \frac{x_{n-1} + n}{1 - nx_{n-1}}, 
\end{equation}
\noindent
with initial condition $x_{1} =1$. The first few values are 
$\{ 1, \, -3, \, 0, \, 4, \, - \frac{9}{19} \},$ and in 
\cite{bomoarctan}
it was conjectured that
$x_{n} \neq 0$ for $n \geq 4$. Later this was proved in 
\cite{tvl1} using the $2$-adic valuation of $x_{n}$. The
sequence $\{ x_{n} \}$ was  linked in \cite{tvl1} to 
\begin{equation}
\omega_{n} := (1+1^{2})(1+2^{2})(1+3^{2}) \cdots (1+n^{2}),
\end{equation}
\noindent
which can be condensated as
\begin{equation}
\omega_{n}  = (1+n^{2}) \omega_{n-1}.
\label{rec1}
\end{equation}
\noindent
This corresponds to $Q(x) = x^{2}+1$ and it 
fits into the type of recurrences considered here.

Section \ref{sec-proof} contains the proof of Theorem \ref{main} and 
Section \ref{sec-examples} presentes examples illustrating the main result. 
In the last section we propose some future directions. 

\section{The proof} \label{sec-proof}
\setcounter{equation}{0}

In the proof we assume that $Q$ has no roots in $\mathbb{N} \cup \{ 0 \}$. 
The general situation can be reduced to this one by a shift of the 
independent variable.

The conclusion of Theorem \ref{main} is trivial if $z_{p} = 0$, so we  
assume $z_{p} > 0$. 
Denote by $b_{1}, \, b_{2}, \cdots, 
b_{z_{p}}$ the  zeros of $Q$ in $\mathbb{Z}/p \mathbb{Z}$. 
The definition of $t_{n}$ yields
\begin{equation}
\nu_{p}(t_{n}) = \sum_{i=1}^{n} \nu_{p}(Q(i)).
\label{sum-0}
\end{equation}
\noindent
All sums below are assumed to run from $i=1$ to $n$. 

Only the indices congruent to $b_{j}$ modulo $p$ contribute 
to (\ref{sum-0}), thus
\begin{equation}
\nu_{p}(t_{n}) = \sum_{i \equiv b_{1} \bmod p} \nu_{p}(Q(i))
 + \cdots +  \sum_{i \equiv b_{z_{p}} \bmod p} 
\nu_{p}(Q(i))
\end{equation}
\noindent
where $1 \leq i \leq n$. For 
fixed $j \in \{ 1, \,2, \, \ldots, z_{p} \},$ we consider  the term
\begin{equation}
\sum_{i \equiv b_{j} \bmod p} \nu_{p}(Q(i)).
\end{equation}
\noindent
Hensel's lemma produces a $p$-adic integer 
\begin{equation}
\beta_{j} = \beta_{j,0} + \beta_{j,1}p + \cdots + \beta_{j,k}p^{k}  +
\label{rep1}
\cdots 
\end{equation}
\noindent
such that $\beta_{j,k} \in \{ 0, \, 1, \, \cdots, p-1 \}, \,  
\beta_{j,0} \equiv b_{j} \bmod p$ and $Q(\beta_{j}) = 0$. Observe that if the 
representation (\ref{rep1}) were finite, then $\beta_{j}$ would be 
a non-negative integer
root of $Q$. This possibility has been excluded. 
Introduce the notation
\begin{equation}
\gamma_{j,s} := \beta_{j,0} + p \beta_{j,1} + p^{2} \beta_{j,2} + 
\cdots + p^{s} \beta_{j,s}p^{s}.
\end{equation}

\medskip

\begin{Definition}
For $n \in \mathbb{N}$, let 
\begin{equation}
r_{n} = \text{Max }\{j: p^{j} \text{divides some }Q(i) \text{ for }
1 \leq i \leq n \}.
\end{equation}
\end{Definition}

\begin{Lem}
The sequence $r_{n} \to \infty$ as $n \to \infty$. Moreover, for large $n$, we
have $p^{r_{n}} \leq n^{\text{deg}(Q) + 1}$, hence $r_{n} = O( \log n)$. 
\end{Lem}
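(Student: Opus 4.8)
The plan is to establish the two assertions in turn: first the divergence $r_n \to \infty$, using the Hensel root $\beta_j$ and its finite truncations; then the upper bound, via the crude size estimate $\abs{Q(i)} = O(i^{d})$, where $d := \deg Q$.

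For the divergence, I would exploit the $p$-adic root $\beta_{j}$ produced by Hensel's lemma together with its truncations $\gamma_{j,s}$. Since $\gamma_{j,s} \equiv \beta_{j} \bmod p^{s+1}$ and $Q$ has integer coefficients, the congruence $Q(\gamma_{j,s}) \equiv Q(\beta_{j}) = 0 \bmod p^{s+1}$ holds, whence $\nu_{p}(Q(\gamma_{j,s})) \geq s+1$. The truncations $\gamma_{j,s}$ are non-negative integers bounded above by $p^{s+1}$, and because the expansion (\ref{rep1}) is infinite (the possibility of a finite expansion having been excluded), they are unbounded as $s \to \infty$. Thus, given any $M$, choosing $n \geq \gamma_{j,M}$ forces $r_{n} \geq M+1$, and therefore $r_{n} \to \infty$.

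For the upper bound, let $m \in \{1, \ldots, n\}$ be an index at which the maximal valuation is attained, so that $p^{r_{n}} \mid Q(m)$. Since $Q$ has no root in $\mathbb{N} \cup \{0\}$, we have $Q(m) \neq 0$, and hence $p^{r_{n}} \leq \abs{Q(m)}$. A trivial estimate gives $\abs{Q(m)} \leq C m^{d} \leq C n^{d}$ for a constant $C$ and all large $m$; since $C \leq n$ eventually, this yields $p^{r_{n}} \leq n^{d+1}$ for large $n$. Taking logarithms, $r_{n} \leq \frac{d+1}{\log p} \log n$, which is the asserted $O(\log n)$.

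The only delicate point is the $p$-adic continuity step in the lower bound: one must verify that a congruence of arguments modulo $p^{s+1}$ propagates to a congruence of values $Q(\gamma_{j,s}) \equiv Q(\beta_{j})$, and this is precisely where the integrality of the coefficients of $Q$ enters. The upper bound, by contrast, is entirely elementary once the standing hypothesis that $Q$ has no root in $\mathbb{N} \cup \{0\}$ is invoked to guarantee $Q(m) \neq 0$.
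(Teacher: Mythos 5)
Your proposal is correct and follows essentially the same route as the paper: the divergence of $r_n$ is obtained from the truncations $\gamma_{j,s}$ of the Hensel root (with $Q(\gamma_{j,s}) \equiv 0 \bmod p^{s+1}$ and the $\gamma_{j,s}$ unbounded because the expansion is infinite), and the logarithmic upper bound from $p^{r_n} \leq \abs{Q(m)} \leq C n^{\deg Q} \leq n^{\deg Q + 1}$. You supply slightly more detail than the paper (the $p$-adic continuity justification and the absorption of $C$ into one extra power of $n$), but the argument is the same.
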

\begin{proof}
Hensel's lemma shows that $\gamma_{j,s}$ satisfies 
$Q(\gamma_{j,s}) \equiv 0 \bmod p^{s+1}$. For any given $M >0$, choose
an integer $s > M$.
Taking $n > \gamma_{j,s-1}$ we have that $i:= \gamma_{j,s-1} \in \{ 1, \, 2, 
\, \cdots, n \}$ and $p^{s} | Q(i)$. The definition of $r_{n}$ implies that
$r_{n} \geq s > M$. Therefore $r_{n} \to \infty$ as $n \to \infty$. Now observe 
that $p^{r_{n}}$ divides $|Q(i)|$ for some 
$1 \leq i \leq n$. The estimate 
\begin{equation}
p^{r_{n}} \leq |Q(i)|  \leq \text{Max}\{ |Q(1)|, \cdots, |Q(n)| \} \leq 
Cn^{\text{deg}(Q)}
\end{equation}
\noindent
gives the upper bound on $r_{n}$. The constant $C$ depends only on the 
coefficients of $Q$. 
\end{proof}

Now 
\begin{equation}
\sum_{i \equiv b_{j} \bmod p} \nu_{p}(Q(i)) = 
\sum_{i \equiv \gamma_{j,0} \bmod p}  1 + 
\sum_{i \equiv \gamma_{j,1}\bmod p^{2} }  1 + 
\cdots + 
\sum_{i \equiv \gamma_{j,r_{n}-1}  \bmod p^{r_{n}}}1,
\nonumber
\end{equation}
\noindent
where all sums range over $1 \leq i \leq n$. The bound 
\begin{equation}
\left\lfloor \frac{n}{p^{s}}  \right\rfloor \leq 
\sum_{i \equiv \gamma_{j,s} \bmod p} 1 \leq 
\left\lfloor \frac{n}{p^{s}}  \right\rfloor + 1
\label{estimate1}
\end{equation}
\noindent
yields 
\begin{eqnarray}
\sum_{i \equiv b_{j} \bmod p} \nu_{p}(Q(i)) & \geq & 
\left( \frac{n}{p} - 1 \right) + 
\left( \frac{n}{p^{2}} - 1 \right) + \cdots + 
\left( \frac{n}{p^{r_{n}}} - 1 \right)  \nonumber \\
& = & n \left( \frac{1}{p} + \frac{1}{p^{2}} + \cdots + \frac{1}{p^{n_{r}}} 
\right) - r_{n} \nonumber \\
& = & \frac{n}{p-1} \left( 1 - p^{-r_{n}} \right) - r_{n}. \nonumber
\end{eqnarray}
\noindent
Therefore
\begin{equation}
\frac{p-1}{n} 
\sum_{i \equiv b_{j} \bmod p} \nu_{p}(Q(i))  \geq 1 - p^{-r_{n}} -
\frac{(p-1)r_{n}}{n} 
\nonumber
\end{equation}
\noindent
and passing to the limit we conclude that
\begin{equation}
\liminf_{n \to \infty} \frac{p-1}{n} 
\sum_{i \equiv b_{j} \bmod p} \nu_{p}(Q(i))  \geq 1. 
\end{equation}
\noindent
Similarly, using the upper bound in (\ref{estimate1}) we obtain
\begin{equation}
\sum_{i \equiv b_{j} \bmod p} \nu_{p}(Q(i))  \leq r_{n} + \frac{n}{p-1},
\nonumber
\end{equation}
\noindent
and it follows  that
\begin{equation}
\limsup_{n \to \infty} \frac{p-1}{n} 
\sum_{i \equiv b_{j} \bmod p} \nu_{p}(Q(i))  \leq 1. 
\end{equation}
\noindent
Therefore, Theorem \ref{main} has been established.

\section{Examples} \label{sec-examples}
\setcounter{equation}{0}

In this section we present some examples illustrating Theorem \ref{main}.

\begin{Definition}
Given a polynomial $Q$ and a prime $p$, we
say that $a \in \mathbb{Z}/p \mathbb{Z}$ is a {\em Hensel zero} of $Q$ if 
$Q(a) \equiv 0 \bmod p$ and $Q'(a) \not \equiv 0 \bmod p$. The prime $p$ is 
called a {\em Hensel prime} for $Q$ if all the zeros of $Q$ in 
$\mathbb{Z}/p \mathbb{Z}$  are Hensel zeros.  We also require 
that $Q$ has at least
one zero in $\mathbb{Z}/p \mathbb{Z}$. 
The {\em asymptotic zero number} is defined (provided it exists) by the limit 
\begin{equation}
N_{p}(Q) := \lim\limits_{n \to \infty} \frac{(p-1) \nu_{p}(t_{n}) }{n}.
\end{equation}
\end{Definition}

Theorem \ref{main} is restated as follows:

\begin{Thm}
Let $p$ be a Hensel prime for
$Q$. Then $N_{p}(Q) = z_{p}$.
\end{Thm}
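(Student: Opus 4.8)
The plan is to deduce this reformulation directly from Theorem \ref{main}. First I would observe that the hypothesis that $p$ is a Hensel prime for $Q$ is precisely the hypothesis of Theorem \ref{main}: the condition that every zero $a$ of $Q$ in $\mathbb{Z}/p\mathbb{Z}$ satisfies $Q'(a) \not\equiv 0 \bmod p$ is exactly the requirement that each root of $Q$ satisfies the hypothesis of Hensel's lemma, while the extra stipulation that $Q$ has at least one zero in $\mathbb{Z}/p\mathbb{Z}$ guarantees $z_p \geq 1$ and rules out the trivial case $z_p = 0$ handled separately at the start of Section \ref{sec-proof}.

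Next I would unwind the definition of the asymptotic zero number. By definition,
\[
N_p(Q) = \lim_{n\to\infty} \frac{(p-1)\nu_p(t_n)}{n},
\]
so the claim $N_p(Q) = z_p$ is the assertion that this limit exists and equals $z_p$. Theorem \ref{main} asserts $\nu_p(t_n) \sim \frac{z_p n}{p-1}$, which by the meaning of the symbol $\sim$ is equivalent to
\[
\lim_{n\to\infty} \frac{(p-1)\nu_p(t_n)}{z_p\, n} = 1.
\]
Multiplying through by $z_p$ shows that $\lim_{n\to\infty} (p-1)\nu_p(t_n)/n$ exists and equals $z_p$, which is exactly the statement $N_p(Q) = z_p$.

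Since the genuine analytic work has already been carried out in the proof of Theorem \ref{main} — producing the $p$-adic roots $\beta_j$ via Hensel's lemma, partitioning the sum over the $z_p$ residue classes, and sandwiching each class sum between $\frac{n}{p-1}(1 - p^{-r_n}) - r_n$ and $\frac{n}{p-1} + r_n$ with $r_n = O(\log n)$ — no further estimation is required here. The only point I would make explicit is that these two-sided bounds yield an actual limit and not merely agreement of $\liminf$ and $\limsup$: because both bounds on $\frac{p-1}{n}\sum \nu_p(Q(i))$ tend to $1$, the squeeze theorem guarantees the limit defining $N_p(Q)$ exists, so $N_p(Q)$ is well defined and equals $z_p$. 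Thus this theorem is indeed just a restatement of Theorem \ref{main}.
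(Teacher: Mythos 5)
Your proposal is correct and matches the paper, which offers no separate argument for this statement beyond noting that it is a restatement of Theorem \ref{main}; your unwinding of the definition of $N_p(Q)$ and the observation that $z_p \geq 1$ (so that $\sim$ may be converted to the stated limit) is exactly what is needed.
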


\noindent
{\bf Note}. The examples will show pairs $(Q,p)$ 
for which $N_{p}(Q) \not \in \mathbb{N}$. 
An appropriate interpretation of this number is lacking in these cases.

In the examples described below we present the 
{\em normalized error}
\begin{equation}
\text{err}_{p}(n;Q) := z_{p}n - (p-1)\nu_{p}(t_{n}(Q)) 
\label{norerror}
\end{equation}
\noindent
and the {\em relative error}:
\begin{equation}
\text{relerr}_{p}(n;Q):= \text{err}_{p}(n;Q) - \text{err}_{p}(n-1;Q).
\label{diferror}
\end{equation}
\noindent
Certain regular structure  of this function, as seen in Figure 
\ref{fig-1c}, will be analyzed in a future report.  \\

\noindent
{\bf Example 1}. 
Let $Q(x) = x^{5} + 2x^{3} +3$. Then $p=5$ is a Hensel prime for $Q$. 
Indeed, the only zeros of $Q$ in $\mathbb{Z}/5 \mathbb{Z}$ are $a=3$ and $a=4$
and $Q'(a) \not \equiv 0 \bmod 5$. Theorem \ref{main} gives 
\begin{equation}
\nu_{5}(t_{n}(Q)) \sim \frac{n}{2}. 
\end{equation}
\noindent
Figure \ref{fig-1a} shows the valuation $\nu_{p}(t_{n}(Q))$. Figure 
\ref{fig-1b} and \ref{fig-1c} depict patterns in the normal and relative 
error, respectively.

{{
\begin{figure}[ht]
\begin{center}
\includegraphics[width=3in]{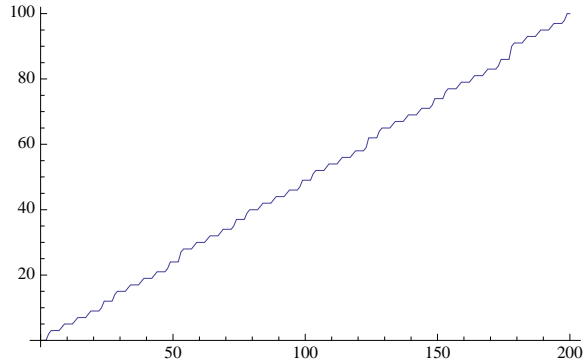}
\caption{The valuation $\nu_{5}(t_{n})$ for $Q(x)  = x^5 + 2x^{3} + 3$.}
\label{fig-1a}
\end{center}
\end{figure}
}}

{{
\begin{figure}[ht]
\begin{center}
\includegraphics[width=3in]{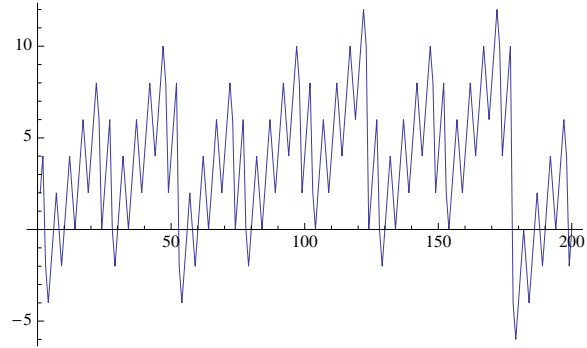}
\caption{The normalized error when $p=5$ and  $Q(x)  = x^5 + 2x^{3} + 3$.}
\label{fig-1b}
\end{center}
\end{figure}
}}

{{
\begin{figure}[ht]
\begin{center}
\includegraphics[width=3in]{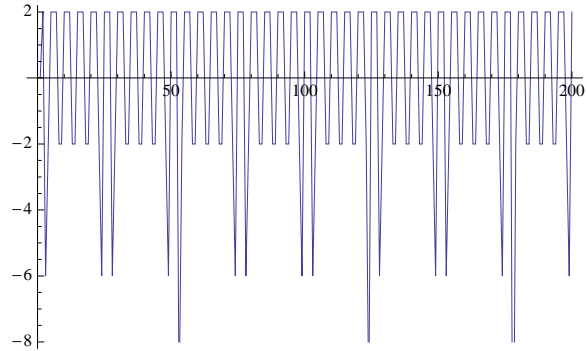}
\caption{The relative error when $p=5$ and $Q(x)  = x^5 + 2x^{3} + 3$.}
\label{fig-1c}
\end{center}
\end{figure}
}}

\medskip

\noindent
{\bf Example 2}. A direct calculation shows that, among the first $20000$
primes,  $p= 3, \, 11$ and $29$ are the 
only non-Hensel primes for $Q(x) = x^{5} + 2x^{3} + 3$. We now describe the 
asymptotic behavior of $\nu_{p}(t_{n}(Q))$ in each of these cases. 
The polynomial $Q$ factors as
\begin{equation}
x^5 + 2x^3 + 3 = (x+1)H(x) 
\end{equation}
\noindent
where 
\begin{equation}
H(x) = x^4-x^3+3x^2-3x+3
\end{equation}
\noindent
and the valuation splits  as
\begin{equation}
\nu_{p}(t_{n}(Q)) = \nu_{p}(t_{n}(x+1)) + \nu_{p}(t_{n}(H(x)).
\end{equation}
\noindent
Theorem \ref{main} gives $\nu_{p}(t_{n}(x+1)) \sim n/(p-1)$, so it remains to
evaluate  $\nu_{p}(t_{n}(H))$. \\

\noindent
{\bf The prime $p=3$.} 
In this case $0$ and $1$
are zeros of $H$ in $\mathbb{Z}/3 \mathbb{Z}$, and only $1$ is a Hensel zero. 
Observe that
\begin{equation}
\nu_{3}(t_{n}(H)) = \sum_{j \equiv 0 \bmod 3} \nu_{3}(H(j)) + 
\sum_{j \equiv 1 \bmod 3} \nu_{3}(H(j)). 
\label{firstsum}
\end{equation}
\noindent
Since $1$ is a Hensel zero, the argument in the proof of Theorem \ref{main} 
implies that
\begin{equation}
\sum_{j \equiv 1 \bmod 3} \nu_{3}(H(j)) \sim \frac{n}{2}.
\end{equation}
\noindent
To analyze the first sum in (\ref{firstsum}), note that
\begin{equation}
H(3k) = 81k^{4} - 27k^{3} + 27k^{2} - 9k + 3.
\end{equation}
\noindent
Thus, $\nu_{3}(H(3k)) = 1$ for $k \in \mathbb{N}$. We obtain that
\begin{equation}
\sum_{j \equiv 0 \bmod 3} \nu_{3}(H(j)) \sim \frac{n}{3},
\end{equation}
and then $\nu_{3}(t_{n}(H)) \sim \frac{5n}{6}$.  Therefore 
$\nu_{3}(t_{n}(Q)) \sim  \frac{4n}{3}$ and $N_{3}(Q) = \tfrac{8}{3}$. 

\medskip

\noindent
{\bf The prime $p=11$}. For this prime, although 
Theorem \ref{main} does not apply to $Q$
itself, it is applicable to both factors $x+1$ and $H(x)$. And,  we deduce
\begin{equation}
\nu_{11}(t_{n}(Q)) \sim \frac{n}{10} + \frac{2n}{10} = \frac{3n}{10}.
\end{equation}
\noindent
Therefore $N_{11}(Q) = 3$. 

\medskip

\noindent
{\bf The prime $p=29$}.  In order to find the 
asymptotic behavior of $\nu_{29}(t_{n}(H))$, observe that $14$ is the only zero 
of $H$ in $\mathbb{Z}/29 \mathbb{Z}$ and 
\begin{equation}
H(29k+14) = 36221+303601k+956217k^2 + 1341395k^{3} + 707281k^{4}.
\end{equation}
\noindent
The valuations of the coefficients in $H(29k+14)$ are $1, \, 2, \, 2, \, 3,$ and
$4$, respectively. Therefore $\nu_{29}(H(j)) = 1$ if $j \equiv 1 \bmod 29$ 
and $0$ otherwise. We conclude that 
\begin{equation}
\nu_{29}(t_{n}(H)) \sim \frac{n}{29}.
\end{equation}
\noindent
Therefore $\nu_{29}(t_{n}(Q)) \sim  \frac{57n}{812} $ and 
$N_{29}(Q) = \tfrac{57}{29}$. \\

\noindent
{\bf Example 3}. The polynomial 
\begin{equation}
Q(x) = x^{8} + x^{5} + x^{3} + 1 = (x^{3}+1)(x^{5}+1)
\end{equation}
\noindent
does not have a Hensel prime. This follows from 
\begin{equation}
\text{gcd} \left( Q(x), Q'(x) \right) = x+1,
\end{equation}
\noindent
so that, for any prime $p$, we have that 
$p-1$ is a zero of $Q$ in $\mathbb{Z}/p \mathbb{Z}$ and $Q'(p-1) = 0$. 
Naturally we have
\begin{equation}
\nu_{p}(t_{n}(Q)) = \nu_{p}(t_{n}(x^{3}+1)) + \nu_{p}(t_{n}(x^{5}+1)).
\label{example4}
\end{equation}

The asymptotic behavior of $\nu_{p}(t_{n}(Q))$ is discussed next.

\begin{Lem}
\label{main-1}
Let $p$ be an odd prime  and  $ x \neq 1$. Then 
\begin{equation}
\nu_{p}(x^{p}-1) = \begin{cases} 
                         0 & \quad \text{ if } x \not \equiv 1 \bmod p \\
                      1 + \nu_{p}(x-1) & \quad \text{ if } x \equiv 1 \bmod p. 
          \end{cases}
\end{equation}
\end{Lem}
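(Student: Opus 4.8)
The plan is to treat the two cases separately, the first being immediate and the second carrying the real content. Throughout I regard $x$ as a $p$-adic integer (in the intended application $x$ is an ordinary integer), so that both the congruence $x \equiv 1 \bmod p$ and the valuation $\nu_{p}(x-1)$ make sense; the hypothesis $x \neq 1$ guarantees $x - 1 \neq 0$, so all the quantities in the statement are well defined.

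For the case $x \not\equiv 1 \bmod p$ I would invoke Fermat's little theorem in the form $x^{p} \equiv x \bmod p$, valid for every integer $x$. This gives $x^{p} - 1 \equiv x - 1 \bmod p$, and since $x \not\equiv 1 \bmod p$ the right-hand side is a nonzero residue. Hence $p \nmid (x^{p}-1)$ and $\nu_{p}(x^{p}-1) = 0$, as claimed.

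The substantive case is $x \equiv 1 \bmod p$. Here I would set $m := \nu_{p}(x-1) \geq 1$ and write $x = 1 + u$ with $u = x-1$, so that $\nu_{p}(u) = m$. Expanding by the binomial theorem,
\begin{equation}
x^{p} - 1 = (1+u)^{p} - 1 = \sum_{j=1}^{p} \binom{p}{j} u^{j} = p\,u + \binom{p}{2} u^{2} + \cdots + u^{p}.
\nonumber
\end{equation}
The strategy is then to show that the linear term $p\,u$ strictly dominates all the others $p$-adically, so that $\nu_{p}(x^{p}-1) = \nu_{p}(p\,u) = 1 + m$. The leading term has valuation exactly $1 + m$. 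For $2 \leq j \leq p-1$ the prime divides $\binom{p}{j}$ exactly once, so that $\nu_{p}\bigl(\binom{p}{j} u^{j}\bigr) = 1 + jm \geq 1 + 2m$, while the last term satisfies $\nu_{p}(u^{p}) = pm$. Since $m \geq 1$ one checks $1 + 2m > 1 + m$ and $pm > 1 + m$, so every term beyond the first has valuation strictly larger than $1 + m$. The ultrametric property of $\nu_{p}$ then forces $\nu_{p}(x^{p}-1) = 1 + m = 1 + \nu_{p}(x-1)$.

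The one point requiring care — and the reason the hypothesis that $p$ is odd cannot be dropped — is the estimate on the quadratic term $\binom{p}{2} u^{2} = \tfrac{p(p-1)}{2} u^{2}$. For odd $p$ the factor $2$ is a unit modulo $p$, so this coefficient has valuation exactly $1$ and the term contributes valuation $1 + 2m > 1 + m$. When $p = 2$ this breaks down: the expansion reads $(1+u)^{2} - 1 = 2u + u^{2}$, and for $m = 1$ both terms have valuation $2$, so cancellation can occur and the clean formula fails. Verifying that, for odd $p$, the linear term is the unique term of minimal valuation is therefore the main (and essentially the only) obstacle; everything else is routine bookkeeping with the ultrametric inequality.
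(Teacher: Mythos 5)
Your proposal is correct and takes essentially the same route as the paper's proof: Fermat's little theorem disposes of the first case, and the second case is a binomial expansion of $(1+u)^{p}-1$ with $u=x-1$ in which the linear term $pu$ is identified as the unique term of minimal valuation (the paper writes $x=kp+1$ and expands in powers of $kp$, which is the same substitution). If anything, your version is slightly more careful, checking the $j=p$ term separately and explaining why oddness of $p$ is needed.
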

\begin{proof}
The first part is clear from the congruence $x^{p} \equiv x \bmod p$. To verify
the second assertion, write $x = kp+1$ and observe that
\begin{equation}
\nu_{p}(x^{p}-1) = \nu_{p} \left( \sum_{r=1}^{p} \binom{p}{r} k^{r} p^{r}
\right). 
\end{equation}
\noindent
For $r > 1$, the $p$-adic valuation of each term in the sum is greater than 
$2 + \nu_{p}(k)$. When $r=1$, it is exactly $2+ \nu_{p}(k)$. Then, putting
$k = \frac{x-1}{p}$ verifies the assertion.
\end{proof}

\begin{Cor}
Let $p$ be an odd prime and $x \in \mathbb{Z}, \,  x \neq 1$. Define 
\begin{equation}
T_{p}(x) = x^{p-1} + x^{p-2} + \cdots + 1. 
\end{equation}
\noindent
Then
\begin{equation}
\nu_{p}(T_{p}(x)) = \begin{cases} 
                         0 & \quad \text{ if } x \not \equiv 1 \bmod p \\
                         1 & \quad \text{ if } x \equiv 1 \bmod p. 
          \end{cases}
\end{equation}
\end{Cor}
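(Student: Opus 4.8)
The plan is to reduce the Corollary directly to the preceding Lemma \ref{main-1} by way of the standard factorization of $x^{p}-1$. Since $x \neq 1$, the finite geometric sum can be written in closed form as
\begin{equation}
T_{p}(x) = \frac{x^{p}-1}{x-1},
\nonumber
\end{equation}
equivalently $x^{p}-1 = (x-1)T_{p}(x)$. The hypothesis $x \neq 1$ is precisely what guarantees that $x-1 \neq 0$, so that all the valuations appearing below are defined. Using the additivity of the $p$-adic valuation over products, I would take $\nu_{p}$ of both sides to obtain the master identity
\begin{equation}
\nu_{p}(T_{p}(x)) = \nu_{p}(x^{p}-1) - \nu_{p}(x-1).
\nonumber
\end{equation}

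From here the proof splits into the same two cases that govern Lemma \ref{main-1}. First, if $x \not\equiv 1 \bmod p$, then the first branch of Lemma \ref{main-1} gives $\nu_{p}(x^{p}-1)=0$, while $p \nmid (x-1)$ forces $\nu_{p}(x-1)=0$ as well; the master identity then yields $\nu_{p}(T_{p}(x)) = 0 - 0 = 0$. Second, if $x \equiv 1 \bmod p$, the second branch of Lemma \ref{main-1} gives $\nu_{p}(x^{p}-1) = 1 + \nu_{p}(x-1)$, and substituting into the master identity produces the telescoping cancellation
\begin{equation}
\nu_{p}(T_{p}(x)) = \bigl(1 + \nu_{p}(x-1)\bigr) - \nu_{p}(x-1) = 1.
\nonumber
\end{equation}
This matches the two stated values and completes the argument.

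There is no genuine obstacle in this proof: the entire content has already been absorbed into Lemma \ref{main-1}, and what remains is purely the bookkeeping of subtracting $\nu_{p}(x-1)$. The only point requiring a small amount of care is well-definedness, namely that $x \neq 1$ keeps $x-1$ (and hence $x^{p}-1$) nonzero so that the subtraction of valuations is legitimate; in the case $x \equiv 1 \bmod p$ one should also note that $\nu_{p}(x-1)$ is a finite nonnegative integer, so the cancellation is valid and not an indeterminate $\infty - \infty$. I would therefore present the proof as a one-line factorization followed by the two-case application of the lemma.
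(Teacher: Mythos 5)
Your proof is correct, and it is exactly the deduction the paper intends: the Corollary is stated without proof immediately after Lemma \ref{main-1}, the factorization $x^{p}-1=(x-1)T_{p}(x)$ being the implicit bridge. Your two-case bookkeeping, including the remark that $x\neq 1$ keeps the valuations finite so the subtraction is legitimate, fills in that omitted step faithfully.
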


\begin{Cor}
Let $p$ be a prime and $x \in \mathbb{Z}, \, x \neq -1$. Then 
\begin{equation}
\nu_{p}(x^{p}+1) = \begin{cases} 
                         0 & \quad \text{ if } x \not \equiv -1 \bmod p \\
               1 + \nu_{p}(x+1) & \quad \text{ if } x \equiv -1 \bmod p. 
          \end{cases}
\end{equation}
\end{Cor}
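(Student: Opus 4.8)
The plan is to deduce this corollary directly from Lemma \ref{main-1} by the substitution $x \mapsto -x$. Assume first that $p$ is odd. Since $p$ is odd we have $(-x)^{p} = -x^{p}$, so that $(-x)^{p} - 1 = -(x^{p}+1)$, and because the $p$-adic valuation is insensitive to sign, $\nu_{p}(x^{p}+1) = \nu_{p}\bigl((-x)^{p} - 1\bigr)$. I would then apply Lemma \ref{main-1} with $x$ replaced by $-x$; the hypothesis $-x \neq 1$ is exactly $x \neq -1$, and it remains only to translate the two congruence conditions. The condition $-x \not\equiv 1 \bmod p$ is $x \not\equiv -1 \bmod p$, giving valuation $0$, while $-x \equiv 1 \bmod p$ is $x \equiv -1 \bmod p$, and in that case $1 + \nu_{p}\bigl((-x)-1\bigr) = 1 + \nu_{p}\bigl(-(x+1)\bigr) = 1 + \nu_{p}(x+1)$. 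This reproduces both branches.

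The only place where anything is used is the sign computation $(-x)^p=-x^p$, so I would verify it at the outset, since it is precisely the point that forces $p$ to be odd and is the crux of the reduction. An alternative and equally short route for odd $p$ factors $x^{p}+1 = (x+1)\,T_{p}(-x)$, where $T_{p}$ is the polynomial of the preceding corollary; then $\nu_{p}(x^{p}+1) = \nu_{p}(x+1) + \nu_{p}\bigl(T_{p}(-x)\bigr)$, and the preceding corollary evaluates $\nu_{p}\bigl(T_{p}(-x)\bigr)$ as $0$ or $1$ according as $x \not\equiv -1$ or $x \equiv -1 \bmod p$. When $x \not\equiv -1 \bmod p$ one has $\nu_{p}(x+1)=0$, recovering the first case; when $x \equiv -1 \bmod p$ the two contributions add to $1 + \nu_{p}(x+1)$.

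The genuinely separate point is the prime $p=2$, and I expect it to be the main obstacle, since the substitution $x\mapsto -x$ no longer converts $x^{2}+1$ into $x^{2}-1$ and, correspondingly, $x+1$ ceases to divide $x^{p}+1$. Here I would argue directly: if $x$ is even then $x^{2}+1$ is odd and $\nu_{2}(x^{2}+1)=0$, matching the first branch. The odd-$x$ situation must be handled by an explicit congruence computation: for odd $x$ one has $x^{2}\equiv 1 \bmod 8$, whence $\nu_{2}(x^{2}+1)=1$ regardless of $\nu_{2}(x+1)$. Thus the clean reduction above carries the whole statement for odd primes, and the formula in the second branch should be read with $p$ odd; the case $p=2$ requires the separate direct argument just indicated.
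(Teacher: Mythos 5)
Your reduction for odd $p$ is exactly the paper's proof: the paper's entire argument is ``Replace $x$ by $-x$ in Lemma \ref{main-1},'' which is the sign computation you spell out. The one substantive addition in your write-up is the prime $p=2$, and here you have in fact uncovered a defect in the statement itself rather than merely a missing case: your computation $x^{2}\equiv 1 \bmod 8$ for odd $x$ gives $\nu_{2}(x^{2}+1)=1$, whereas the displayed formula predicts $1+\nu_{2}(x+1)\geq 2$; already $x=1$ gives $\nu_{2}(1^{2}+1)=1\neq 2$. So the corollary as printed is false for $p=2$ and should carry the hypothesis that $p$ is odd (as Lemma \ref{main-1} does), and the paper's one-line proof silently does not cover $p=2$ at all. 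You hedge by saying the second branch ``should be read with $p$ odd''; it would be cleaner to state outright that the $p=2$ case of the second branch is simply $\nu_{2}(x^{2}+1)=1$ for odd $x$, which is what your mod $8$ argument proves. Everything else, including the alternative factorization $x^{p}+1=(x+1)T_{p}(-x)$, is correct.
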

\begin{proof}
Replace $x$ by $-x$ in Lemma \ref{main-1}.
\end{proof}

\begin{Cor}
Let $p$ be a prime and $x \in \mathbb{Z}, \,  x \neq -1$. Define 
\begin{equation}
S_{p}(x) = x^{p-1} - x^{p-2} + \cdots - x + 1. 
\end{equation}
\noindent
Then
\begin{equation}
\nu_{p}(S_{p}(x)) = \begin{cases} 
                         0 & \quad \text{ if } x \not \equiv -1 \bmod p \\
                         1 & \quad \text{ if } x \equiv -1 \bmod p. 
          \end{cases}
\end{equation}
\end{Cor}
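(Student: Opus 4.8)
The plan is to reduce the statement to the Corollary on $T_{p}$ already established, in precisely the way the Corollary on $x^{p}+1$ was obtained from Lemma \ref{main-1}: by the substitution $x \mapsto -x$. First I would record the algebraic identity $S_{p}(x) = T_{p}(-x)$. To verify it, note that $T_{p}(-x) = \sum_{k=0}^{p-1} (-x)^{k}$, and since $p$ is odd the sign of the $x^{k}$ term is $(-1)^{k}$; collecting these signs reproduces exactly $x^{p-1} - x^{p-2} + \cdots - x + 1 = S_{p}(x)$.

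With this identity at hand the valuation is immediate, since $\nu_{p}(S_{p}(x)) = \nu_{p}(T_{p}(-x))$. Applying the Corollary on $T_{p}$ to the argument $-x$ gives the value $0$ when $-x \not\equiv 1 \bmod p$ and $1$ when $-x \equiv 1 \bmod p$. As the congruence $-x \equiv 1 \bmod p$ is identical to $x \equiv -1 \bmod p$, this is precisely the asserted dichotomy. The hypothesis $x \neq -1$ ensures $-x \neq 1$, so that Corollary is legitimately applicable.

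An equivalent route uses the factorization $x^{p}+1 = (x+1)S_{p}(x)$, valid for odd $p$, which yields $\nu_{p}(S_{p}(x)) = \nu_{p}(x^{p}+1) - \nu_{p}(x+1)$; substituting the preceding Corollary on $x^{p}+1$ returns $0$ in the non-congruent case and cancels $\nu_{p}(x+1)$ against itself in the congruent case, leaving $1$. Either path is a one-line deduction. The only step demanding any attention — and the nearest thing to an obstacle — is the sign bookkeeping behind $S_{p}(x) = T_{p}(-x)$, or equivalently the factorization, both of which hinge on $p$ being odd; the prime $p=2$ is degenerate and would be checked directly.
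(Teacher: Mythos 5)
Your argument is correct and is exactly the route the paper intends: this corollary is stated without proof, immediately after the $x^{p}+1$ corollary whose entire proof is ``replace $x$ by $-x$,'' and your identity $S_{p}(x)=T_{p}(-x)$ (equivalently the factorization $x^{p}+1=(x+1)S_{p}(x)$) is the evident one-line deduction from the $T_{p}$ corollary, respectively from the $x^{p}+1$ corollary. Your closing caution about $p=2$ is well placed --- the statement really does require $p$ odd (for $p=2$ and $x$ odd, $\nu_{2}(T_{2}(-x))=\nu_{2}(1-x)$ can be arbitrarily large), consistent with the ``odd prime'' hypothesis carried by the $T_{p}$ corollary --- but that is a slip in the paper's own statement of the hypothesis, not a gap in your proof.
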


The number of roots of $x^{q}+1 \equiv 0 \bmod p$, that is, 
$z_{p}(x^{q}+1)$ stated in the Lemma below appears at the end of Section 
8.1 of \cite{ireland1}. \\

\begin{Lem}
\label{irosen}
Let $p$ and $q$ be primes. The number of solutions of 
the congruence $x^{p}+1 \equiv 0 \bmod q$ is $\text{gcd}(p,q-1)$.
\end{Lem}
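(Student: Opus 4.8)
The plan is to recast the congruence as an equation in the cyclic group $G = (\mathbb{Z}/q\mathbb{Z})^{*}$ and to count its solutions using the structure of $G$. First I would note that any solution of $x^{p} + 1 \equiv 0 \bmod q$ must be a unit, since $x \equiv 0 \bmod q$ gives $x^{p} + 1 \equiv 1$. Hence the problem reduces to counting the $x \in G$ with $x^{p} \equiv -1 \bmod q$. Writing $H = \{ x \in G : x^{p} \equiv 1 \bmod q \}$ for the kernel of the power map $x \mapsto x^{p}$, I observe that the solution set of $x^{p} \equiv -1$ is either empty or a coset of $H$; consequently it has either $0$ or $|H|$ elements.

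The next step is to evaluate $|H| = \gcd(p, q-1)$. Choosing a primitive root $g$ modulo $q$ and writing $x = g^{a}$ with $0 \leq a \leq q-2$, the condition $x^{p} \equiv 1 \bmod q$ is equivalent to the linear congruence $pa \equiv 0 \bmod (q-1)$, whose number of solutions is exactly $\gcd(p, q-1)$. Equivalently, one may factor $\gcd(x^{p}-1,\, x^{q-1}-1) = x^{\gcd(p,q-1)}-1$ over $\mathbb{F}_{q}$ and use that this polynomial divides $x^{q-1}-1$, hence splits into distinct linear factors and contributes precisely $\gcd(p,q-1)$ roots.

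Finally I must show the solution set is nonempty, which is the only point where the nature of $p$ enters. When $p$ is odd one has $(-1)^{p} = -1$, so $x \equiv -1 \bmod q$ is always a solution; the solution set is then a full coset of $H$, giving the count $\gcd(p, q-1)$. The prime $q = 2$ is dispatched directly, with $x \equiv 1$ the unique solution and $\gcd(p,1) = 1$. This existence step is the main obstacle: for $p = 2$ the element $-1$ need not be a square modulo $q$ (it fails precisely when $q \equiv 3 \bmod 4$), so a hypothesis such as $p$ odd is genuinely required for the stated formula, and indeed the applications to $x^{3}+1$ and $x^{5}+1$ involve only odd $p$.
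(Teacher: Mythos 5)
Your argument is correct, and it is worth noting that the paper itself offers no proof of this lemma at all: it simply cites the end of Section~8.1 of Ireland and Rosen. Your primitive-root/coset argument is exactly the standard one behind that reference (the solution set of $x^{p}\equiv -1$ is empty or a coset of the kernel of the $p$-th power map on $(\mathbb{Z}/q\mathbb{Z})^{*}$, and that kernel has order $\gcd(p,q-1)$), so you are supplying the omitted details rather than taking a different route. All three steps check out: the reduction to units, the count $|H|=\gcd(p,q-1)$ via $pa\equiv 0 \bmod (q-1)$, and the existence step using $(-1)^{p}=-1$ for odd $p$ together with the trivial case $q=2$.

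Your closing observation is also a genuine catch rather than a quibble: as literally stated, with $p$ and $q$ arbitrary primes, the lemma is false for $p=2$ and $q\equiv 3 \bmod 4$ (e.g.\ $x^{2}+1\equiv 0 \bmod 3$ has no solutions while $\gcd(2,2)=2$), because $-1$ fails to be a $p$-th power exactly there. The statement should carry the hypothesis that $p$ is odd (or that $-1$ is a $p$-th power residue modulo $q$). This does not affect the paper, since the lemma is only invoked for $x^{3}+1$ and $x^{5}+1$, but your proof makes transparent where the hypothesis is used, which the bare citation does not.
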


\begin{Cor}
Let $p$ be an odd prime. Then 
\begin{equation}
\nu_{p}(t_{n}(x^{p} \pm 1)) \sim \frac{(2p-1)n}{p(p-1)}.
\label{form1}
\end{equation}
\noindent
If $q$ is a prime, $q \neq p$, then 
\begin{equation}
\nu_{q}(t_{n}(x^{p} \pm 1)) \sim \frac{\text{gcd}(p,q-1) \, n}{q-1}.
\end{equation}
\end{Cor}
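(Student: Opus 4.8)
The plan is to handle the two estimates separately, because the prime $p$ in the exponent and an auxiliary prime $q\neq p$ play very different roles. Throughout write $Q(x)=x^{p}\pm 1$, so that $Q'(x)=px^{p-1}$.

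I would dispose of the second formula first, since it is a direct application of Theorem~\ref{main}. Every root $a$ of $Q$ in $\mathbb{Z}/q\mathbb{Z}$ is nonzero, because $Q(0)=\pm 1\not\equiv 0\bmod q$; and since $q\neq p$ gives $p\not\equiv 0\bmod q$ (here $p$ odd handles $q=2$), we get $Q'(a)=pa^{p-1}\not\equiv 0\bmod q$. Thus every zero is a Hensel zero and $q$ is a Hensel prime. To count the zeros, Lemma~\ref{irosen} gives $z_{q}(x^{p}+1)=\text{gcd}(p,q-1)$, while $z_{q}(x^{p}-1)=\text{gcd}(p,q-1)$ as well, since the solutions of $x^{p}\equiv 1$ in the cyclic group $(\mathbb{Z}/q\mathbb{Z})^{\times}$ of order $q-1$ are exactly the elements of order dividing $\text{gcd}(p,q-1)$. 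Theorem~\ref{main} then yields $\nu_{q}(t_{n}(x^{p}\pm 1))\sim \text{gcd}(p,q-1)\,n/(q-1)$ immediately.

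The first formula is the substantive case, and here Theorem~\ref{main} does not apply: modulo $p$ every root of $Q$ fails the Hensel hypothesis because $Q'(x)=px^{p-1}\equiv 0\bmod p$ identically. Instead I would compute $\nu_{p}(t_{n}(x^{p}-1))=\sum_{i=1}^{n}\nu_{p}(i^{p}-1)$ directly from Lemma~\ref{main-1}. Only the indices $i\equiv 1\bmod p$ contribute, and for those the lemma gives $\nu_{p}(i^{p}-1)=1+\nu_{p}(i-1)$. I would split the sum into the contribution of the constant $1$, which is just the number of such indices, asymptotic to $n/p$, and the contribution of $\nu_{p}(i-1)$. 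For the latter I substitute $i-1=p\ell$ (discarding the excluded root $i=1$, which changes only lower order terms), turning it into $\sum_{\ell}\nu_{p}(p\ell)=\sum_{\ell}(1+\nu_{p}(\ell))$ with $\ell$ running up to about $n/p$; this produces another $n/p$ together with $\nu_{p}(\lfloor n/p\rfloor !)$. The last quantity is evaluated by Legendre's formula, equivalently by the $Q(x)=x$ case of Theorem~\ref{main}, as $\sim \tfrac{1}{p-1}\cdot\tfrac{n}{p}$. Summing the three pieces gives $\tfrac{2n}{p}+\tfrac{n}{p(p-1)}=\tfrac{(2p-1)n}{p(p-1)}$. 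For $x^{p}+1$ the identical computation applies, using the corollary $\nu_{p}(i^{p}+1)=1+\nu_{p}(i+1)$ on the indices $i\equiv -1\bmod p$ and the substitution $i+1=p\ell$; the counts match, so the asymptotic is the same.

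The hard part is the self-referential nature of the first estimate: summing $\nu_{p}(i-1)$ over an arithmetic progression collapses, after $i-1=p\ell$, into a factorial valuation, so the $x^{p}-1$ problem reduces to the Legendre case one scale down. Making this rigorous rather than merely formal requires bounding the errors coming from the floor functions and from the omitted root $i=1$; since these are all of size $O(\log n)$ or smaller, they vanish in the limit and the identity $\tfrac{2n}{p}+\tfrac{n}{p(p-1)}=\tfrac{(2p-1)n}{p(p-1)}$ completes the argument.
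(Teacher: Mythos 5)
Your argument is correct, and for the genuinely nontrivial first asymptotic it takes a somewhat different route than the paper. The paper factors $x^{p}+1=(x+1)S_{p}(x)$ (resp.\ $x^{p}-1=(x-1)T_{p}(x)$), applies Theorem~\ref{main} to the linear factor to get $n/(p-1)$, and applies the explicit corollary $\nu_{p}(S_{p}(i))=1$ on the residue class $i\equiv-1\bmod p$ to get $n/p$ from the complementary factor; the sum $\tfrac{n}{p-1}+\tfrac{n}{p}$ is your $\tfrac{(2p-1)n}{p(p-1)}$. You instead skip the factorization, apply Lemma~\ref{main-1} directly to write $\nu_{p}(i^{p}\pm1)=1+\nu_{p}(i\mp1)$ on the relevant class, and evaluate $\sum\nu_{p}(i\mp1)$ by the substitution $i\mp1=p\ell$, reducing to Legendre's formula one scale down. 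The two decompositions are arithmetically equivalent (your ``count of indices'' term $n/p$ is exactly the paper's $S_{p}$/$T_{p}$ contribution, and your $\tfrac{n}{p}+\tfrac{n}{p(p-1)}=\tfrac{n}{p-1}$ is the paper's linear-factor contribution), but yours is more self-contained: it avoids invoking Theorem~\ref{main} for the linear factor and makes the $O(\log n)$ error terms explicit, at the cost of being longer than the paper's three-line citation of previously established facts. Your treatment of the second asymptotic (checking the Hensel condition via $Q'(a)=pa^{p-1}$ with $a\not\equiv0$ and $p\not\equiv0\bmod q$, then counting roots by $\gcd(p,q-1)$) matches the paper's direct appeal to Theorem~\ref{main} and Lemma~\ref{irosen}, and you correctly supply the root count for $x^{p}-1$, which the paper leaves implicit. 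One small point worth making explicit if you write this up: for $Q(x)=x^{p}-1$ the integer $i=1$ is a genuine root of $Q$, so per the paper's conventions the recurrence starts past it; your remark that discarding it changes only lower-order terms is the right disposal of that issue.
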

\begin{proof}
Theorem \ref{main} gives $\nu_{p}(t_{n}(x+1)) \sim \frac{n}{p-1}$. The 
expression for $\nu_{p}(S_{p}(x))$ yields $\nu_{p}(S_{p}(x)) \sim n/p$.
The asymptotic behavior of $\nu_{q}(t_{n}(x^{p} \pm 1))$ follow directly 
from Theorem \ref{main}. 
\end{proof}

We now complete the analysis of 
\begin{equation}
\nu_{p}(t_{n}(Q)) = \nu_{p}(t_{n}(x^{3}+1)) + \nu_{p}(t_{n}(x^{5}+1)).
\label{example4a}
\end{equation}
\noindent
If $p \neq 3$ is a prime, then 
\begin{equation}
\nu_{p} \left( t_{n}(x^{3}+1) \right) \sim \frac{z_{p}(x^{3}+1) \, n}{p-1}.
\end{equation}
\noindent
Similarly, for $p \neq 5$ prime, we have
\begin{equation}
\nu_{p} \left( t_{n}(x^{5}+1) \right) \sim \frac{z_{p}(x^{5}+1) \, n}{p-1}.
\label{form2}
\end{equation}
\noindent
Thus, (\ref{form1}) and (\ref{form2}) yield 
\begin{equation}
\nu_{3}(t_{n}(Q))  \sim   \nu_{3}(t_{n}(x^{3}+1)) + \nu_{3}(t_{n}(x^{5}+1) ) 
 = \frac{5n}{6} + \frac{n}{2} = \frac{4n}{3}.
\nonumber
\end{equation}
\noindent
Similarly, $\nu_{5}(t_{n}(Q)) \sim 7n/10$. \\

Now let $p \neq 3, \, 5$ be a prime. Theorem \ref{main} now applies directly 
to give
\begin{equation}
\nu_{p}(t_{n}(Q)) \sim \frac{ \left[ z_{p}(x^{3}+1) + z_{p}(x^{5}+1) \right] 
\, n }{p-1}. 
\end{equation}
\noindent
Lemma \ref{irosen} yields
\begin{equation}
\nu_{p}(t_{n}(Q)) \sim \frac{ \left[ \text{gcd}(3,p-1) + 
\text{gcd}(5,p-1)  \right] 
\, n }{p-1}. 
\end{equation}

The asymptotic zero number is given by
\begin{equation}
N_{p}((x^{3}+1)(x^{5}+1)) = \begin{cases}
        \frac{8}{3} & \text{ if } p = 3 \\
        \frac{14}{5} & \text{ if } p = 5 \\
        \text{gcd}(3,p-1) + \text{gcd}(5,p-1)  & \text{ if } p \neq 3, \, 5. \\
            \end{cases}
\end{equation}

\medskip

\noindent
{\bf Example 4}.  Let $p$ be an arbitrary prime and define
\begin{equation}
A_{p}(x) = (px+1)^{2} \left( (p+1)x+1 \right). 
\end{equation}
\noindent
A direct calculation shows that $p$ is the only Hensel prime for $A_{p}$. 
Therefore 
\begin{equation}
\nu_{p}(t_{n}(A_{p})) \sim \frac{n}{p-1}. 
\end{equation}
\noindent
To compute the asymptotics for a prime  $q \neq p$, let $Q_{1}(x) = px+1$
and $Q_{2}(x) = (p+1)x+1$, and observe that 
\begin{equation}
\nu_{q}(t_{n}(A_{p})) = 2 \nu_{q}(t_{n}(Q_{1})) + 
\nu_{q}(t_{n}(Q_{2})). 
\end{equation}
\noindent
Theorem \ref{main} applies to both $Q_{1}$ and $Q_{2}$. The case for $Q_{1}$ is 
immediate since $px+1 \equiv 0 \bmod q$ has a unique solution. To evaluate 
$\nu_{q}(t_{n}(Q_{2}))$ observe that the number of  solutions of
$(p+1)x+1 \equiv 0 \bmod q$ is $0$ or $1$, according to whether $q$ divides 
$p+1$ or not. Thus 
\begin{equation}
\nu_{q}(t_{n}(A_{p}))  \sim  \frac{(2 + \omega_{p,q})n}{q-1}
\end{equation}
\noindent
where 
\begin{equation}
\omega_{p,q} = \begin{cases}
                1  & \text{ if } q \text{ divides } p+1, \\
                0  & \text{ otherwise.} 
             \end{cases}
\nonumber
\end{equation}
\noindent
We conclude that
\begin{equation}
N_{q}(A_{p}) = \begin{cases} 
                1 & \quad \text{ if } p = q, \\
                2 + \omega_{p,q} & \quad \text{ if } p \neq q. 
        \end{cases}
\end{equation}

\medskip

\section{Future directions} \label{sec-future}
\setcounter{equation}{0}

In this section we outline certain generalizations of the main result 
of the paper. 

A natural extension of Theorem \ref{main}
deals with the situation in which there is an element 
$b \in \mathbb{Z}/p \mathbb{Z}$  such that 
\begin{equation}
Q(b) \equiv  Q'(b) \equiv \cdots \equiv Q^{(k-1)}(b) \equiv 0 \bmod p.
\end{equation}
\noindent
The  question of how the multiplicities of the roots enter in the 
asymptotic behavior of $\nu_{p}(t_{n}(Q))$ appears to be a salient quest, and 
this will be addressed elsewhere. 

Another interesting continuation of the ideas presented in this 
paper would be  the 
study of $p$-adic valuation of sequences satisfying second order 
recurrences
\begin{equation}
t_{n} = Q_{1}(n) t_{n-1} + Q_{2}(n) t_{n-2},
\end{equation}
\noindent
with polynomials $Q_{1}$ and $Q_{2}$. This problem includes, classically, the
case of Fibonacci and Stirling numbers.

\bigskip

\no
{\bf Acknowledgments}. 
The work of the third author was partially funded by
$\text{NSF-DMS } 0409968$. The second author was partially supported as a 
graduate student by the same grant. \\

\bigskip

%\bibliography{../../../AllRef/biblio2}
%\bibliographystyle{plain}
%\end{document}

\end{document}